\documentclass[12pt]{amsart}
\usepackage{mathrsfs}
\usepackage{amsmath}
\usepackage{verbatim}
\usepackage{hyperref}
\usepackage{amsfonts}
\usepackage{bbm}
\usepackage{young}
\usepackage{amscd}
\usepackage{color}
\usepackage{amssymb}
\usepackage{graphicx}
\usepackage[all, knot]{xy}
\usepackage[x-cp1252]{inputenx}
\usepackage{CJK}
\usepackage{cite}
\xyoption{arc} \pagestyle{plain}

\usepackage{datetime}
\usepackage{amstext}
\usepackage{mathtools}
\usepackage{amsxtra}
\usepackage{amsopn}
\usepackage{amsthm}
\newtheorem{thm}{Theorem}[section]
\theoremstyle{definition}

\theoremstyle{plain}
\newtheorem*{mr}{Main Theorem}
\theoremstyle{remark}

\theoremstyle{plain}

\theoremstyle{plain}

\theoremstyle{plain}
\newtheorem{prop}[thm]{Proposition}
\theoremstyle{conjecture}

\usepackage{cancel}


\newcommand{\sym}[1][]{\ensuremath{{\rm Sym}^{#1}}}              
\newcommand{\Sym}{{\rm Sym}}

\newcommand{\mc}{\mathcal}
\CompileMatrices
\usepackage{listings}
\lstset{
basicstyle=\small\ttfamily,
columns=flexible,
breaklines=true
}

\title{A Sheaf Cohomology Restriction Formula on toric complete intersections}
\author{Zhentao Lyu}
\address{School of Mathematics and Statistics, Beijing Institute of Technology, Beijing 100081, P.R. China}
\email{zhentao@sas.upenn.edu}
\date{\today}
\begin{document}
\maketitle
\begin{abstract}
We prove a sheaf cohomology restriction (SCORE) formula for a class of vector bundles on complete intersections in toric varieties. The formula enables one to compute cohomology products on the complete intersection $X$ via computations on the ambient space $V$ and potentially compute certain quantum corrections to the classical sheaf cohomology ring. Schematically the formula reads $(s_1,...,s_r)_X = (s_1,...,s_r, g)_V$ with $g$ being an explicitly described quantity derived from the monad data of the bundle.
\end{abstract}
\section{Introduction}

For a compact K\"ahler manifold $X$ and a holomorphic vector bundle $\mc{E}$, 
we are interested in studying the \textit{polymology} $$H_\mc{E}^*(X):=\bigoplus_{p,q} H^q(X, \wedge^p \mc{E}^*),$$ where $\mc{E}^*$ is the dual of $\mc{E}$. This is an associative algebra \cite{donagi2014mathematical}. In the special case when $\mc{E}$ is the tangent bundle $T_X$, it is the familiar cohomology of differential forms. The polymology, or classical sheaf cohomology ring, has been studied on toric varieties \cite{donagi2014mathematical} and Grassmannians \cite{guo2017classical}, particularly motivated by the efforts to mathematically understand the physics theory of A/2-twisted gauge linear sigma models introduced by Witten \cite{witten1993phases}. See also
\cite{katz2006notes,mcorist2008half,mcorist2009summing,kreuzer20110,mcorist2011revival, donagi2013physical,Guo2017,melnikov2019introduction,guo2020quantum} for some physics developments. The mathematical endeavor is to define quantum sheaf cohomology $QH^*_\mc{E}(X)$ for $\mc{E}$ with special properties, which is a ``quantum correction" of $H^*_\mc{E}(X)$. Mathematically this has only been done on toric varieties \cite{donagi2014mathematical}, while \cite{mcorist2009summing} works out a quantum restriction formula for toric complete intersections via localization techniques in physics. They motivate our study of  $H^*_\mc{E}(X)$ on hypersurfaces and complete intersections in toric varieties as well as its quantum corrections.

Let $X$ be a smooth hypersurface in a smooth projective variety $V$. Consider vector bundles $\mc{E}^*_X$ and $\mc{E}^*_V$ on $X$ and $V$ respectively, with the requirement that they fit in the short exact sequence (SES)
\begin{equation}
0 \to \mc{O}_X(-X) \xrightarrow{J}  \mc{E}_V^*|_X \to \mc{E}^*_X \to 0.
\end{equation}
We would like to compute the cohomology product $$c_X: H^1(\mc{E}^*_X) \times ... \times H^1(\mc{E}^*_X) \to H^{n-1}(\wedge^{n-1}\mc{E}^*_X)$$
and relate it to 
$$c_V: H^1(\mc{E}^*_V) \times ... \times H^1(\mc{E}^*_V) \to H^n(\wedge^n\mc{E}^*_V).$$ Let $\sigma_i \in H^1(\mc{E}^*_V), i =1,...,n$, denote the image $c_V(\sigma_1,...,\sigma_n)$ by $\langle \sigma_1,...,\sigma_n \rangle_V$.
If we have a restriction map \begin{equation*}
\begin{array}{rll}
H^1(\mc{E}^*_V) & \to & H^1(\mc{E}^*_X)
\\
\sigma &\mapsto &\bar{\sigma}
\end{array},
\end{equation*}
we would like to have a sheaf cohomology restriction (SCORE) formula in the form
\begin{equation}
\langle \bar{\sigma}_1,...,\bar{\sigma}_{n-1} \rangle_X = \langle \sigma_1,...,\sigma_{n-1}, [\mc{E}^*_X] \rangle_V,
\end{equation}
with some class $[\mc{E}^*_X]\in H^1(\mc{E}^*_V)$.

As an example, note that in the case when $\mc{E}^*_V$ is the cotangent bundle $\Omega_V$,  let $D_1,...,D_r$ be divisors of $V$, and they restrict to divisors on $X\subset V$. 
Then the intersection of $D_1|_X,...,D_r|_X$ can be computed by 
\begin{equation}
( [D_1|_X],...,[D_r|_X])_X = ( [D_1],...,[D_r] ,[X] )_V. 
\end{equation}

In this paper, we prove a SCORE formula for the case where $V$ is an $n$ dimensional smooth projective toric variety and $\mc{E}^*_V$ is a small `toric' deformation of the cotangent bundle, and $X$ is a smooth complete intersection, of codimension $m$. We make use of a deformed version of the toric Euler sequence of the cotangent bundle, so that both $\mc{E}^*_V$ and $\mc{E}^*_X$ can be expressed in monads with terms splitting into direct sums of line bundles. This enables us to make sense of the restriction map, the comparison between $H^{n-m}(\wedge^{n-m}\mc{E}^*_X)$ and $H^n(\wedge^n\mc{E}^*_V)$, as well as the precise form of the class $[\mc{E}^*_X]$ inserted in the SCORE formula.
The main theorem is stated as follows:
\begin{mr}[Theorem \ref{thm:CI}]
Let $V$ be a smooth projective toric variety of dimension $n$. Let $Y_k, k=1,...,m (m\leq n-3)$ be smooth hypersurfaces in $V$ defined by hyperplane setions $f_k\in H^0(V,\mc{O}(H_k))$, such that every $X_k:=Y_1\cap ...\cap Y_k$ is smooth. Let the vector bundles $\mc{E}^*_{X_k}$ be small deformations of the cotangent bundle $\Omega_{X_k}$ defined by the middle cohomology of \eqref{coh-def:E-CI} such that $H^1(\mc{E}^*_{X_k})\cong W$. Then for $\sigma_i\in H^1(\mc{E}^*_{X_m}), i=1,...,n-m$, their cohomology product can be computed on the ambient space $V$ via the following SCORE formula, with $\gamma_k\in W$ determined by the presentation of $\mc{E}^*_{X_k}$ as in \eqref{cond:well-def-complex}.
\begin{equation}
\langle \sigma_1,\sigma_2,...,\sigma_{n-m}\rangle_X = 
\langle \sigma_1,\sigma_2,...,\sigma_{n-m}, \gamma_1,...,\gamma_{m}\rangle_V.
\end{equation}
\end{mr}

The contents of the rest of the paper are as follows: In Section \ref{sec:hypersurface-setting}, we set up the hypersurface case in detail and state a SCORE formula in this case. In Section \ref{sec:hypersurface-proof} we prove the hypersurface case SCORE formula after introducing two double complexes than contains all the relevant information, by doing various diagram chases. In Section \ref{sec:CI} we state and prove the Main Theorem, and prove it using induction. We set up the assumptions of the theorem in a way so that the proof of the induction step is completely analogous to the hypersurface case, with the only extra work (proof of Proposition \ref{prop-CI}) being the check of the vanishing of some sheaf cohomology groups. We conclude with a remark explaining that our assumption on $H^1(\mc{E}^*_{X_k})$ in the Main Theorem is not a strong constraint.

\textbf{Acknowledgements} The author would like to thank Qizheng Yin and BiMRC for the hospitality and helpful  discussions. The author also thank Ron Donagi and Ilarion Melnikov for helpful comments and clarifications. This paper is based on work supported in part by the National Natural Science Foundation of China (No. 12101044) and Beijing Institute of Technology Research Fund for Young Scholars.

\section{The setup of the hypersurface case}\label{sec:hypersurface-setting}
We work over the complex numbers $\mathbb{C}$. Let $V$ be a smooth projective toric variety of dimension $n$ . The cotangent bundle $\Omega_V$ fits in a SES called the toric Euler sequence (\cite{cox2011toric}, Chapter 8):
\begin{equation}\label{ses:toric-euler}
0 \to \Omega_V \to  \oplus\mc{O}(-D_i) \xrightarrow{E_0} \mc{O}\otimes W \to 0,
\end{equation}
where $W\cong \mathbb{C}^r$ is a complex vector space, $r$ is the rank of Pic($V$), and the direct sum is taken over all toric invariant divisors $D_i$. The map
$E_0$ can be explicitly written down using Cox coordinates of $V$. We call a vector bundle $\mc{E}^*_V$  \textit{toric deformed bundle}, if it is the kernel of a map $E$, which is a deformation of $E_0$. In other words, $\mc{E}^*_V$ fits in the SES
\begin{equation}\label{ses:toric-euler-deformed}
0 \to \mc{E}_V^* \to  \oplus\mc{O}(-D_i) \xrightarrow{E} \mc{O}\otimes W \to 0.
\end{equation}
Let $\sigma_i \in H^1(\mc{E}^*_V), i =1,...,n$, denote the image of $(\sigma_1,...,\sigma_n)$ under the cohomology multiplication map $$c_V: H^1(\mc{E}^*_V) \times ... \times H^1(\mc{E}^*_V) \to H^n(\wedge^n\mc{E}^*_V)$$ by $\langle \sigma_1,...,\sigma_n \rangle_V$.
In \cite{donagi2014mathematical}, the authors computed the cohomology ring $H^*_\mc{E}(V):=\bigoplus H^q(\wedge^p\mc{E}^*_V)$. They defined a Stanley-Reisner ideal $SR(V,\mc{E})$ and showed that under the identification $H^1(\mc{E}^*_V)\cong W$, one has $H^*_\mc{E}(V)\cong \Sym^*W/SR(V,\mc{E})$. In the process, they used the Koszul resolution of \eqref{ses:toric-euler-deformed}, in the form
\begin{equation}\label{les:E-Koszul}
\begin{array}{ll}
0 &\to \wedge^n\mc{E}_V^* \to  \wedge^n (\oplus\mc{O}(-D_i)) \to \wedge^{n-1}(\oplus\mc{O}(-D_i))\otimes W \\
&\to ... \to (\oplus\mc{O}(-D_i))\otimes \sym[n-1]W \to \mc{O}\otimes  \sym[n]W \to 0
\end{array}
\end{equation}
to show that the induced map $\sym[n] W \to H^n(\wedge^n\mc{E}^*_V)$ is compatible with the multiplication $c_V$ .

Now 
let $X$ be a smooth hypersurface in $V$ defined by a hyperplane section $f$. The relationship between the cotangent bundle $\Omega_V$ of $V$ and the cotangent bundle $\Omega_X$ of $X$ is encoded in the SES
\begin{equation}\label{ses:def-Omega_X}
0 \to \mc{O}_X(-X) \xrightarrow{J_0} \Omega_V|_X \to \Omega_X \to 0,
\end{equation}
where $J_0=(\partial f)$, if one view $\Omega_V$ as a subbundle of $\oplus \mc{O}(-D_i)$. In this paper we consider vector bundles $\mc{E}^*_X$ that are deformations of $\Omega_X$ which are determined by the SES
\begin{equation}\label{ses:def-E}
0 \to \mc{O}_X(-X) \xrightarrow{J}  \mc{E}_V^*|_X \to \mc{E}^*_X \to 0.
\end{equation}
Equivalently, $\mc{E}^*_X$ is a vector bundle that is the middle cohomology of the complex\footnote{
The interested reader could consult \cite{mcorist2009summing}, Section 4 for the physics significance of this complex.
}
\begin{equation}\label{coh-def:E}
 \mc{O}_X(-X) \xrightarrow{J}  \oplus\mc{O}(-D_i) \xrightarrow{E} \mc{O}\otimes W.
\end{equation}


Let $f$ be the defining equation of $X\subset V$. \eqref{coh-def:E} being a complex indicates $E\circ J$ is zero on $X$. So it is of the form $\gamma\cdot f$ for some $\gamma \in W$. 

Next, recall the Poincare residue map produces a SES
\begin{equation}\label{ses:poincare}
0 \to \Omega^n_V \to  \Omega^n_V(X) \xrightarrow{P.R.}  \Omega^{n-1}_X \to 0,
\end{equation}
where the isomorphism $\Omega_V^{n}|_X \cong \Omega^{n-1}_X \otimes \mc{O}_X(-X)$ induced from \eqref{ses:def-Omega_X} is used to write $\Omega^n_V(X)|_X$ as $\Omega^{n-1}_X$. Analogously, we have
\begin{equation}
\wedge^n\mc{E}_V^*(X)|_X \cong \wedge^{n-1}\mc{E}_X^*,
\end{equation}
and a SES
\begin{equation}\label{ses:wedge-n-e-star}
0 \to \wedge^n\mc{E}_V^* \to  \wedge^n\mc{E}_V^*(X) \to  \wedge^{n-1}\mc{E}_X^* \to 0.
\end{equation}
This induces a map $\delta: H^{n-1}( \wedge^{n-1}\mc{E}_X^*)\to H^n(\wedge^n\mc{E}_V^*)$.
\begin{prop}
When $\mc{E}^*_V$ and $\mc{E}^*_X$ are small deformations of $\Omega_V$ and $\Omega_X$ respectively, $\delta: H^{n-1}( \wedge^{n-1}\mc{E}_X^*)\to H^n(\wedge^n\mc{E}_V^*)$ is an isomorphism.
\end{prop}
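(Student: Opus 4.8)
The plan is to recognise $\delta$ as a connecting homomorphism and to kill its two neighbours in the long exact sequence by a vanishing theorem. Applying the cohomology functor to the short exact sequence \eqref{ses:wedge-n-e-star} produces
\begin{equation*}
\cdots \to H^{n-1}(\wedge^n\mc{E}_V^*(X)) \to H^{n-1}(\wedge^{n-1}\mc{E}_X^*) \xrightarrow{\delta} H^n(\wedge^n\mc{E}_V^*) \to H^n(\wedge^n\mc{E}_V^*(X)) \to \cdots,
\end{equation*}
in which $\delta$ is exactly the map in the statement. By exactness it suffices to prove the two vanishings $H^{n-1}(V,\wedge^n\mc{E}_V^*(X)) = 0$ and $H^n(V,\wedge^n\mc{E}_V^*(X)) = 0$: the first forces $\ker\delta = 0$, and the second forces $\delta$ to be surjective.

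Both vanishings reduce to a statement about a single line bundle once one identifies $\wedge^n\mc{E}_V^*$. First I would compute the determinant of $\mc{E}_V^*$ from the defining sequence \eqref{ses:toric-euler-deformed}: since $\mc{O}\otimes W$ is trivial, $\det\mc{E}_V^* = \det\!\big(\oplus\mc{O}(-D_i)\big) = \mc{O}(-\sum_i D_i) = \Omega_V^n = K_V$. Here it is essential that $\mc{E}_V^*$ is a genuine rank $n$ vector bundle, which is guaranteed by the small-deformation hypothesis (so that $E$ is surjective and the terms of \eqref{ses:toric-euler-deformed} are precisely those of the undeformed Euler sequence); in particular $\det\mc{E}_V^*$ is unchanged from the $\Omega_V$ case. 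Consequently $\wedge^n\mc{E}_V^*(X) \cong K_V\otimes\mc{O}(X)$.

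With this identification the two vanishings become $H^i(V, K_V\otimes\mc{O}(X)) = 0$ for $i = n-1, n$. Since $X$ is cut out by a hyperplane section, $\mc{O}(X)$ is nef (indeed ample for a very ample section), and on the smooth projective toric variety $V$ the Batyrev--Borisov vanishing theorem gives $H^i(V, K_V\otimes L) = 0$ for all $i > 0$ whenever $L$ is nef; taking $L = \mc{O}(X)$ yields the desired vanishings, whose range $i>0$ certainly contains $i = n-1$ and $i = n$. If one prefers, classical Kodaira vanishing applies verbatim when $\mc{O}(X)$ is ample.

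The only point requiring care is the positivity input on $\mc{O}(X)$: the vanishing theorem needs $\mc{O}(X)$ to be at least nef, so one must confirm that the notion of ``hyperplane section'' in the setup produces a nef (base-point-free) divisor class, which is exactly the positivity one expects from a section chosen so that $X$ is smooth. Everything else is formal: the determinant computation is a one-line consequence of \eqref{ses:toric-euler-deformed}, and the passage from the two vanishings to the isomorphism is a direct chase in the long exact sequence above. Note that the smallness hypothesis enters only to keep $\mc{E}_V^*$ and $\mc{E}_X^*$ locally free, so that \eqref{ses:wedge-n-e-star} and the identification $\wedge^n\mc{E}_V^*(X)|_X\cong\wedge^{n-1}\mc{E}_X^*$ remain valid; it plays no role in the cohomological vanishing itself.
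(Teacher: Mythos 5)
Your proposal is correct, and its engine is the same as the paper's: identify $\delta$ as the connecting map of \eqref{ses:wedge-n-e-star} and kill its two neighbours $H^{n-1}$ and $H^{n}$ of the middle term, which is (a twist of) $K_V\otimes\mc{O}(X)$, by a Kodaira-type vanishing. Where you genuinely diverge is in how the deformation is handled. The paper proves the statement for the undeformed pair $(\Omega_V,\Omega_X)$, where the middle term of \eqref{ses:poincare} is literally $\Omega_V^n(X)=K_V\otimes\mc{O}(X)$ and Kodaira applies, and then appeals to smallness of the deformation (implicitly, upper semicontinuity of $h^{n-1}$ and $h^n$ of the middle term) to transfer the two vanishings. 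You instead observe that $\det\mc{E}_V^*=\mc{O}(-\sum_i D_i)=K_V$ holds \emph{exactly} for any bundle fitting in \eqref{ses:toric-euler-deformed}, since the determinant of the middle and right terms of the Euler-type sequence does not see the deformation of $E$; so $\wedge^n\mc{E}_V^*(X)\cong K_V\otimes\mc{O}(X)$ on the nose and the vanishing needs no semicontinuity argument. This buys a slightly stronger statement --- the two vanishings hold for \emph{all} toric deformed bundles, not just small deformations, with smallness entering only to guarantee the structural inputs ($\mc{E}_X^*$ locally free and the residue-type identification $\wedge^n\mc{E}_V^*(X)|_X\cong\wedge^{n-1}\mc{E}_X^*$ underlying \eqref{ses:wedge-n-e-star}) --- at the cost of the determinant computation, which is one line. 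You are also right to flag the positivity of $\mc{O}(X)$ as the real hypothesis: the paper's invocation of Kodaira vanishing tacitly assumes the ``hyperplane section'' class is ample (or at least nef, if one uses toric/Batyrev--Borisov vanishing as you suggest), and neither proof works without some such assumption.
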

\begin{proof}
This is true for the undeformed case, which is easily seen from the vanishing of related cohomologies of the middle term in \eqref{ses:poincare} by Kodaira vanishing theorem. So it is true for small deformations.
\end{proof}

Now we are ready to state the theorem of this section:
\begin{thm}[SCORE formula for hypersurfaces\label{thm:hypersuface}]
Let $X$ be a smooth hyperplane section of $V$, and  $\mc{E}^*_X$ be a small deformation of the cotangent bundle $\Omega_X$ defined by the middle cohomology of \eqref{coh-def:E}. Then under the identification $H^1(\mc{E}^*_X)\cong W$, the cohomology multiplication $H^1(\mc{E}^*_X)\times ... \times H^1(\mc{E}^*_X) \to H^{n-1}(\wedge^{n-1}\mc{E}^*_X)$ can be conducted on $V$. Namely, there exists a $\gamma=\gamma(\mc{E}^*_X)\in W$, such that $\langle \sigma_1,...\sigma_{n-1} \rangle_X = \langle \sigma_1,...,\sigma_{n-1}, \gamma \rangle_V$.
\end{thm}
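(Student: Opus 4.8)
The plan is to reduce the stated identity to the commutativity of a single square and then to verify that square by a diagram chase through two \v{C}ech--Koszul double complexes. Recall that on the ambient space the product factors through the symmetric algebra: by \cite{donagi2014mathematical} the Koszul resolution \eqref{les:E-Koszul} induces a map $\Phi_V\colon \sym[n]W\to H^n(\wedge^n\mc{E}^*_V)$ compatible with $c_V$, so that $\langle\sigma_1,\dots,\sigma_{n-1},\gamma\rangle_V=\Phi_V(\sigma_1\cdots\sigma_{n-1}\cdot\gamma)$. First I would build the analogue on $X$. Taking exterior powers of the restriction of \eqref{ses:def-E} gives, for each $k$, a two-step filtration of $\wedge^k(\mc{E}^*_V|_X)$ with graded pieces $\wedge^k\mc{E}^*_X$ and $\mc{O}_X(-X)\otimes\wedge^{k-1}\mc{E}^*_X$; splicing this with the restriction of \eqref{les:E-Koszul} produces a resolution of $\wedge^{n-1}\mc{E}^*_X$ whose end term is $\mc{O}_X\otimes\sym[n-1]W$, hence a map $\Phi_X\colon\sym[n-1]W\to H^{n-1}(\wedge^{n-1}\mc{E}^*_X)$ compatible with $c_X$. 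One must also check that the identification $H^1(\mc{E}^*_X)\cong W$ is compatible with restriction, so that $\bar\sigma_i$ corresponds to $\sigma_i\in W$ and $\langle\bar\sigma_1,\dots,\bar\sigma_{n-1}\rangle_X=\Phi_X(\sigma_1\cdots\sigma_{n-1})$. With these in hand the theorem is equivalent to commutativity of
\[
\begin{CD}
\sym[n-1]W @>{\cdot\,\gamma}>> \sym[n]W\\
@V{\Phi_X}VV @VV{\Phi_V}V\\
H^{n-1}(\wedge^{n-1}\mc{E}^*_X) @>{\delta}>> H^n(\wedge^n\mc{E}^*_V),
\end{CD}
\]
where the top arrow is multiplication by $\gamma$ in $\sym[*]W$ and $\delta$ is the isomorphism of the Proposition: indeed, granting the square, $\delta\langle\bar\sigma_1,\dots,\bar\sigma_{n-1}\rangle_X=\delta\Phi_X(\sigma_1\cdots\sigma_{n-1})=\Phi_V(\sigma_1\cdots\sigma_{n-1}\gamma)=\langle\sigma_1,\dots,\sigma_{n-1},\gamma\rangle_V$.

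To prove the square commutes I would assemble two double complexes. Fix a finite affine open cover $\mathfrak{U}$ of $V$ that also covers $X$. The first is the \v{C}ech bicomplex of the terms of \eqref{les:E-Koszul}, whose total cohomology computes $H^n(\wedge^n\mc{E}^*_V)$ and realizes $\Phi_V$ as an iterated connecting map. The second is the \v{C}ech bicomplex of the $X$-resolution constructed above, twisted by $\mc{O}(X)$ so that its top row realizes $\wedge^n\mc{E}^*_V(X)$. The two are linked along the top by the short exact sequence \eqref{ses:wedge-n-e-star}, whose connecting map is exactly $\delta$, and by the restriction to $X$ of the first complex. I would then chase a representative: starting from $\sigma_1\cdots\sigma_{n-1}\in\sym[n-1]W$ in the corner of the $X$-complex, lift it step by step up the Koszul direction to a total cocycle computing $\Phi_X(\sigma_1\cdots\sigma_{n-1})$, then transport the result through \eqref{ses:wedge-n-e-star} into the $V$-complex and identify it with the image of $\sigma_1\cdots\sigma_{n-1}\cdot\gamma$.

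The crux, and the only place where genuinely new input enters, is the interaction of $\delta$ with the monad map $J$. Passing from $\wedge^n\mc{E}^*_V(X)$ to $\wedge^n\mc{E}^*_V$ under \eqref{ses:wedge-n-e-star} multiplies a local lift by the defining section $f$, while the bottom step of the $X$-resolution differs from that of the restricted $V$-resolution precisely by the insertion of $J\colon\mc{O}_X(-X)\to\oplus\mc{O}(-D_i)$. The complex condition for \eqref{coh-def:E} reads $E\circ J=\gamma\cdot f$, i.e.\ the composite the chase is forced through equals $\gamma f$; the factor $f$ cancels the twist introduced by $\delta$, leaving multiplication by $\gamma\in W$ at the level of $\sym[*]W$. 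Tracking this carefully yields $\delta\circ\Phi_X=\Phi_V\circ(\,\cdot\,\gamma)$, which is the desired square.

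I expect the main obstacle to be the bookkeeping in the double-complex chase rather than any single hard vanishing. One must (i) verify that the two Koszul-type resolutions agree row by row after restriction, up to the single discrepancy measured by $J$; (ii) keep the cup-product/wedge structure and its signs consistent, so that $\sigma_1\cdots\sigma_{n-1}$ is genuinely carried to $\langle\bar\sigma_1,\dots,\bar\sigma_{n-1}\rangle_X$; and (iii) confirm that the section $f$ produced by $\delta$ and the $f$ in $E\circ J=\gamma f$ are literally the same section, so that the cancellation is exact rather than up to a unit. The cohomology vanishings needed to run the chase, ensuring the intermediate \v{C}ech groups die and the connecting maps are the relevant isomorphisms, are, as in the Proposition, inherited from the undeformed case by Kodaira vanishing together with stability under small deformation.
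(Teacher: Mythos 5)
Your proposal is correct and follows essentially the same route as the paper: both reduce the theorem to the commutativity of the square $\delta\circ\Phi_X=\Phi_V\circ(\cdot\,\gamma)$ and verify it by chasing two Koszul-type double complexes linked by \eqref{ses:wedge-n-e-star}, with the monad identity $E\circ J=\gamma\cdot f$ cancelling the $f$-twist introduced by $\delta$ (the paper packages the $X$-side resolution via $\mc{F}={\rm Coker}\,J$ rather than your exterior-power filtration, but this is the same construction). One small correction: the vanishing that makes the chase work is $H^i(\oplus\mc{O}(-D_i))=0$, which is a direct toric computation rather than a Kodaira-vanishing-plus-small-deformation argument.
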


We remark that $\gamma(\mc{E}^*_X)$ will be shown to be exactly the $\gamma$ such that $E\circ J =\gamma\cdot f$, as explained below \eqref{coh-def:E}. The proof of the theorem will be given in next section, mainly consists of diagram chases on two double complexes, which allows us to establish the commutativity of the two squares in the following diagram respectively.
{
\newcommand{\A}{\sym[n-1]W}
\newcommand{\B}{H^{n-1}(\wedge^{n-1}\mc{E}_X^*)}
\newcommand{\nA}{H^0(S_1(X)|_X)}
\newcommand{\nnA}{\sym[n]W}
\newcommand{\nnB}{H^{n}(\wedge^{n}\mc{E}_V^*)}
%
%
\begin{equation}
\xymatrix{
~\A		\ar[r]	\ar[d]		&		\nA	\ar[r] \ar[d]	&	~\nnA		\ar[d]
\\
~\B		\ar@{=}[r]			&		\B	\ar[r]^{\delta}	& \nnB
.}
\end{equation}
}

\section{Two double complexes and consequences}\label{sec:hypersurface-proof}
In this section we build two double complexes and use them to give the proof of Theorem \ref{thm:hypersuface}.
\subsection{Two double complexes}
In view of \eqref{ses:toric-euler-deformed}, we write the Koszul resolution of $\wedge^n\mc{E}^*_V$ as 
$0\to \wedge^n\mc{E}^*_V \to Z_\bullet$, with $Z_i = \wedge^{i}(\oplus \mc{O}(-D_i))\otimes \sym[n-i]W$. Combine it with the SESs 
\begin{equation}\label{ses:zzz}
0 \to Z_i \to Z_i(X) \to Z_i(X)|_X\to 0,
\end{equation}
we get the following double complex with exact rows and columns:

{
\newcommand{\seszb}{0}
\newcommand{\seszc}{0}
\newcommand{\seszd}{0}

\newcommand{\sesaa}{0}
\newcommand{\sesab}{\wedge^n\mc{E}_V^*}
\newcommand{\sesac}{\wedge^n\mc{E}_V^*(X)}
\newcommand{\sesad}{\wedge^{n-1}\mc{E}_X^*}
\newcommand{\sesae}{0}
\newcommand{\sesba}{0}
\newcommand{\sesbb}{\wedge^n (\oplus\mc{O}(-D_i))}
\newcommand{\sesbc}{\wedge^n (\oplus\mc{O}(-D_i))(X)}
\newcommand{\sesbd}{\wedge^n (\oplus\mc{O}_X(-D_i))(X)}
\newcommand{\sesbe}{0}
\newcommand{\sesca}{0}
\newcommand{\sescb}{(\oplus\mc{O}(-D_i))\otimes \sym[n-1]W}
\newcommand{\sescc}{(\oplus\mc{O}(-D_i))(X)\otimes \sym[n-1]W}
\newcommand{\sescd}{(\oplus\mc{O}_X(-D_i))(X)\otimes \sym[n-1]W}
\newcommand{\sesce}{0}
\newcommand{\sesda}{0}
\newcommand{\sesdb}{\mc{O}\otimes \sym[n]W}
\newcommand{\sesdc}{\mc{O}(X)\otimes \sym[n]W}
\newcommand{\sesdd}{\mc{O}_X(X)\otimes \sym[n]W}
\newcommand{\sesde}{0}
\newcommand{\sesyb}{0}
\newcommand{\sesyc}{0}
\newcommand{\sesyd}{0}
%
%
\begin{equation}\label{diag:PR}
\resizebox{350pt}{!}{\xymatrix{
& \seszb \ar[d] & \seszc \ar[d] & \seszd \ar[d] &  \\
\sesaa \ar[r] & \sesab \ar[r]\ar[d] & \sesac \ar[r]\ar[d] & \sesad \ar[r]\ar[d] & \sesae \\
\sesba \ar[r] & \sesbb \ar[r]\ar@{-->}[d] & \sesbc \ar[r]\ar@{-->}[d] & \sesbd \ar[r]\ar@{-->}[d] & \sesbe \\
\sesca \ar[r] & \sescb \ar[r]\ar[d] & \sescc \ar[r]\ar[d] & \sescd \ar[r]\ar[d] & \sesce \\
\sesda \ar[r] & \sesdb \ar[r]\ar[d] & \sesdc \ar[r]\ar[d] & \sesdd \ar[r]\ar[d] & \sesde \\
& \sesyb & \sesyc& \sesyd  & 
}}
\end{equation}
}
Each column in the double complex can be broken into SESs. For the first column, we have 
\begin{equation}\label{ses:szs}
0 \to S_i \to Z_i \to S_{i-1}\to 0,  i=1,...,n,
\end{equation}
where $S_i := {\rm Ker} (Z_i \to Z_{i-1}), i=1,...,n$ and $S_0:=\sym[n]W\otimes \mc{O}$.
This induces maps
$\delta_i: H^{i-1}(S_{i-1}) \to H^i(S_i)$
 on cohomology. This is analyzed in detail in \cite{donagi2014mathematical}. In short, it gives a surjective morphism
\begin{equation}
\delta_V: \sym[n] W \to H^n(\wedge^n\mc{E}_V^*),
\end{equation}
where $\delta_V = \delta_n \circ \delta_{n-1}\circ ...\circ \delta_1$. In view of \eqref{ses:zzz}, the second and third columns are broken into 
\begin{equation}\label{ses:szsx}
0 \to S_i(X) \to Z_i(X) \to S_{i-1}(X)\to 0,  i=1,...,n,
\end{equation}
and
\begin{equation}\label{ses:szs-restricted}
0 \to S_i(X)|_X \to Z_i(X)|_X \to S_{i-1}(X)|_X\to 0,  i=1,...,n,
\end{equation}
respectively.

Now we introduce the second double complex.
In view of the $J$ map in \eqref{coh-def:E}, the definition of $\mc{E}^*_X$, define $\mc{F}$ to be ${\rm Coker} J$. We then have a diagram of exact rows and columns: 
{
\newcommand{\seszb}{0}
\newcommand{\seszc}{0}
\newcommand{\seszd}{0}
\newcommand{\sesaa}{0}
\newcommand{\sesab}{\mc{O}_X(-X)}
\newcommand{\sesac}{\mc{O}_X(-X)}
\newcommand{\sesba}{0}
\newcommand{\sesbb}{\mc{E}_V^*|_X}
\newcommand{\sesbc}{\oplus\mc{O}_X(-D_i)}
\newcommand{\sesbd}{\mc{O}_X\otimes W}
\newcommand{\sesbe}{0}
\newcommand{\sesca}{0}
\newcommand{\sescb}{\mc{E}_X^*}
\newcommand{\sescc}{\mc{F}}
\newcommand{\sescd}{\mc{O}_X\otimes W}
\newcommand{\sesce}{0}
\newcommand{\sesyb}{0}
\newcommand{\sesyc}{0}
\newcommand{\sesyd}{0}
%
%
%
%
\begin{equation}\label{diag:def-of-E^*}
\resizebox{250pt}{!}{\xymatrix{
& \seszb \ar[d] & \seszc \ar[d] &  &  \\
& \sesab \ar@{=}[r]\ar[d] & \sesac \ar[d] & &  \\
\sesba \ar[r] & \sesbb \ar[r]\ar[d] & \sesbc \ar[r]\ar[d] & \sesbd \ar[r]\ar@{=}[d] & \sesbe \\
\sesca \ar[r] & \sescb \ar[r]\ar[d] & \sescc \ar[r]\ar[d] & \sescd \ar[r] & \sesce \\
& \sesyb & \sesyc  &  & 
}}
\end{equation}
}
Observe from the last row that $\mc{F}$ is locally free (as an extension of locally free modules). 
Take the Koszul resolution of $\wedge^n\mc{E}^*_X$ using the last row, and combine it with the last column of \eqref{diag:PR}, we have 
{
\newcommand{\seszb}{0}
\newcommand{\seszc}{0}
\newcommand{\seszd}{0}

\newcommand{\sesaa}{0}
\newcommand{\sesab}{\wedge^{n-1}\mc{E}_X^*}
\newcommand{\sesac}{\wedge^{n-1}\mc{E}_X^*}
\newcommand{\sesae}{0}
\newcommand{\sesba}{0}
\newcommand{\sesbb}{\wedge^{n-1}\mc{F}}
\newcommand{\sesbc}{\wedge^{n}(\oplus\mc{O}_X(-D_i))(X)}
\newcommand{\sesbd}{\wedge^{n}\mc{F}(X)}
\newcommand{\sesbe}{0}
\newcommand{\sesca}{0}
\newcommand{\sescb}{\mc{F}\otimes \sym[n-2]W}
\newcommand{\sescc}{\wedge^2(\oplus\mc{O}_X(-D_i))(X)\otimes \sym[n-2]W}
\newcommand{\sescd}{\wedge^2\mc{F}(X)\otimes \sym[n-2]W}
\newcommand{\sesce}{0}
\newcommand{\sesda}{0}
\newcommand{\sesdb}{\mc{O}_X\otimes \sym[n-1]W}
\newcommand{\sesdc}{\oplus\mc{O}_X(-D_i)(X)\otimes \sym[n-1]W}
\newcommand{\sesdd}{\mc{F}(X)\otimes \sym[n-1]W}
\newcommand{\sesde}{0}
\newcommand{\seseb}{0}
\newcommand{\sesec}{\mc{O}_X(X)\otimes \sym[n]W}
\newcommand{\sesed}{\mc{O}_X(X)\otimes \sym[n]W}
\newcommand{\sesyb}{0}
\newcommand{\sesyc}{0}
\newcommand{\sesyd}{0}
%
%
\begin{equation}\label{diag:exterior-induced}
\resizebox{350pt}{!}{\xymatrix{
& \seszb \ar[d] & \seszc \ar[d] & &  \\
& \sesab \ar@{=}[r]\ar[d] & \sesac \ar[d] & \seszd \ar[d] &  \\
\sesba \ar[r] & \sesbb \ar[r]\ar@{-->}[d] & \sesbc \ar[r]\ar@{-->}[d] & \sesbd \ar[r]\ar@{-->}[d] & \sesbe \\
\sesca \ar[r] & \sescb \ar[r]\ar[d] & \sescc \ar[r]\ar[d] & \sescd \ar[r]\ar[d] & \sesce \\
\sesda \ar[r] & \sesdb \ar[r]\ar[d] & \sesdc \ar[r]\ar[d] & \sesdd \ar[r]\ar[d] & \sesde \\
& \seseb & \sesec \ar@{=}[r]\ar[d] & \sesed \ar[d] & \\
&& \sesyc& \sesyd   & 
}}
\end{equation}
}
To see the exactness of the rows, recall that $\mc{F}$ fits in the SES
\begin{equation}
0\to \mc{O}_X(-X) \to \oplus\mc{O}_X(-D_i)\to \mc{F}\to 0.
\end{equation}
Applying the exterior power functor $\wedge^k$, $k=1,2,3,...$, and note that $\mc{O}_X(-X)$ is a line bundle, we get the exactness inductively.
\subsection{Proof of Theorem \ref{thm:hypersuface}}

Similarly to the argument below the SESs \eqref{ses:szs}, we break down the first two columns of \eqref{diag:exterior-induced} into SESs, the second of which is exactly the third column of \eqref{diag:PR}. Hence we adopt the notions in \eqref{ses:szs-restricted}. They induced natural maps on cohomologies, which fits into the commutative diagram 
{
\newcommand{\A}{\sym[n-1]W}
\newcommand{\B}{H^{n-1}(\wedge^{n-1}\mc{E}_X^*)}
\newcommand{\nA}{H^0(S_1(X)|_X)}
\newcommand{\nnA}{\sym[n]W}
\newcommand{\nnB}{H^{n}(\wedge^{n}\mc{E}_V^*)}
%
%
\begin{equation}\label{diag:hypersurface-cd-1}
\xymatrix{
~\A		\ar[r]	\ar[d]		&		\nA	\ar[d]	
\\
~\B		\ar@{=}[r]			&		\B		
.}
\end{equation}
}
Next, using \eqref{ses:szs} and \eqref{ses:szs-restricted} from the first and third columns of \eqref{diag:PR}, we have the following commutative diagram in cohomologies:
{%
\newcommand{\A}{\sym[n-1]W}
\newcommand{\B}{H^{n-1}(\wedge^{n-1}\mc{E}_X^*)}
\newcommand{\nA}{H^0(S_1(X)|_X)}
\newcommand{\nnA}{H^1(S_1)}
\newcommand{\nnB}{H^{n}(\wedge^{n}\mc{E}_V^*)}
%
%
\begin{equation}\label{diag:hypersurface-cd-2}
\xymatrix{
		\nA	\ar[r] \ar[d]	&	~\nnA		\ar[d]_{}
\\
		\B	\ar[r]^{\delta}	& \nnB
.}
\end{equation}
}
Next, we show that 
\begin{prop}\label{prop-hypersurface-alpha-map}
There exists a map $\alpha: H^0(S_1(X)|_X) \to \sym[n]W$ such that the following diagram commutes.
{
\newcommand{\A}{\sym[n-1]W}
\newcommand{\B}{H^{n-1}(\wedge^{n-1}\mc{E}_X^*)}
\newcommand{\nA}{H^0(S_1(X)|_X)}
\newcommand{\nnA}{\sym[n]W}
\newcommand{\nnB}{H^{n}(\wedge^{n}\mc{E}_V^*)}
%
%
\begin{equation}\label{diag:hypersurface-cd-3}
\xymatrix{
\nA	\ar@{-->}[r]^{\alpha} \ar[dr]	&	~\nnA		\ar[d]
\\
& ~ H^1(S_1)
.}
\end{equation}
}
\end{prop}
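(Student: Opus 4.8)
The plan is to realize $\alpha$ as a snake-type zig-zag map built from a single $3\times3$ diagram of short exact sequences, and then to read off the commutativity of \eqref{diag:hypersurface-cd-3} from the standard compatibility of the two connecting homomorphisms in that diagram. First I would assemble, at Koszul level $i=1$, the $3\times3$ diagram whose rows are \eqref{ses:szs}, \eqref{ses:szsx} and \eqref{ses:szs-restricted}, namely $0\to S_1\to Z_1\to S_0\to 0$, its twist $0\to S_1(X)\to Z_1(X)\to S_0(X)\to 0$, and its restriction $0\to S_1(X)|_X\to Z_1(X)|_X\to S_0(X)|_X\to 0$; the middle column is \eqref{ses:zzz} with $i=1$, and the two outer columns are the twisting sequences $0\to S_1\to S_1(X)\to S_1(X)|_X\to 0$ and $0\to S_0\to S_0(X)\to S_0(X)|_X\to 0$, which are exact by the snake lemma applied to the surjective Koszul differential $Z_1\to S_0$. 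With these identifications the diagonal of \eqref{diag:hypersurface-cd-3} is the connecting map $\partial\colon H^0(S_1(X)|_X)\to H^1(S_1)$ of the left column, and the vertical map is the connecting map $\delta_1\colon H^0(S_0)\to H^1(S_1)$ of the top row; since $S_0=\mc{O}\otimes\sym[n]W$ we have $H^0(S_0)=\sym[n]W$, and the inclusion $S_0\hookrightarrow S_0(X)$ is multiplication by the defining section $f$.

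To construct $\alpha$, take $\xi\in H^0(S_1(X)|_X)$ and view it in $H^0(Z_1(X)|_X)$ through the bottom-row inclusion. I claim it lifts to $H^0(Z_1(X))$: by the middle column this needs only $H^1(Z_1)=0$, and writing $Z_1=\bigl(\oplus\mc{O}(-D_i)\bigr)\otimes\sym[n-1]W$ it suffices to know $H^1(\mc{O}(-D_i))=0$, which is immediate from $0\to\mc{O}(-D_i)\to\mc{O}\to\mc{O}_{D_i}\to 0$ together with $H^1(\mc{O}_V)=0$ and the fact that $H^0(\mc{O}_V)\to H^0(\mc{O}_{D_i})$ is an isomorphism of one-dimensional spaces. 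Choose such a lift $\tilde\xi$ and set $\eta$ to be its image under the middle-row projection $Z_1(X)\to S_0(X)$. Because $\xi$ lies in the kernel of the bottom-row projection, $\eta$ restricts to $0$ on $X$; by exactness of the right column it is therefore $f\cdot\alpha(\xi)$ for a unique $\alpha(\xi)\in H^0(S_0)=\sym[n]W$. Fixing once and for all a linear section of the surjection $H^0(Z_1(X))\to H^0(Z_1(X)|_X)$ makes $\alpha$ a genuine linear map; changing the lift alters $\alpha(\xi)$ only by an element of the image of $H^0(Z_1)\to H^0(S_0)$, which lies in $\ker\delta_1$, so $\delta_1\circ\alpha$ is independent of all choices.

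The identity $\delta_1\circ\alpha=\partial$, up to a global sign that I will absorb into $\alpha$, is then a \v{C}ech diagram chase. On a fixed cover I would choose local lifts $v_i$ of $\alpha(\xi)$ along $Z_1\to S_0$ and local lifts $w_i$ of $\xi$ along $S_1(X)\to S_1(X)|_X$, and exploit the two commuting squares $b\circ\iota=\iota_X\circ a$ and $\pi_X\circ b=(\cdot f)\circ\pi$ of the diagram, where $\iota,\iota_X$ are the left-hand inclusions, $\pi,\pi_X$ the right-hand projections, and $a,b$ the twisting maps. The defining relation $\pi_X(\tilde\xi)=f\cdot\alpha(\xi)$ gives $\tilde\xi|_{U_i}-b(v_i)=\iota_X(w_i)$; subtracting this over $U_{ij}$ and using the two squares identifies the cocycle representing $\delta_1(\alpha(\xi))$ with the cocycle representing $\partial(\xi)$ up to sign. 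This is precisely the classical anticommutativity of the two connecting homomorphisms attached to a $3\times3$ square, specialized to our sheaves.

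I expect the only real content to be bookkeeping: pinning down that the diagonal of \eqref{diag:hypersurface-cd-3} is the left-column connecting map $\partial$, and tracking the choices and the sign so that $\alpha$ is well defined and the triangle closes. The single geometric input, $H^1(Z_1)=0$, is elementary as above and is not the obstacle. Looking ahead, I would also note that evaluating $\alpha$ on the image of $\sym[n-1]W\hookrightarrow H^0(S_1(X)|_X)$ coming from \eqref{diag:hypersurface-cd-1} and substituting $E\circ J=\gamma\cdot f$ from \eqref{coh-def:E} will identify $\alpha$ with multiplication by $\gamma$, which is what produces the $\gamma$-insertion of Theorem \ref{thm:hypersuface}; that computation, however, is logically separate from the existence statement asked for here.
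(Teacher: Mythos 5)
Your proposal is correct and follows essentially the same route as the paper: you assemble the same $3\times 3$ diagram of the twisting/restriction sequences against the Koszul pieces $0\to S_1\to Z_1\to S_0\to 0$, use the vanishing $H^1(Z_1)=0$ to lift and perform exactly the paper's ``down, left, down, left'' zig-zag defining $\alpha$, and deduce commutativity from the standard compatibility of the two connecting homomorphisms. The only differences are that you supply more detail than the paper (an explicit \v{C}ech verification, the well-definedness and sign bookkeeping, and an elementary proof of $H^1(\oplus\mc{O}(-D_i))=0$ where the paper cites the toric computation of \cite{donagi2014mathematical}), none of which changes the argument.
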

\begin{proof}
From the double complex \eqref{diag:PR}, we get the following diagram with exact rows and columns:
{%
\newcommand{\seszb}{0}
\newcommand{\seszc}{0}
\newcommand{\seszd}{0}
\newcommand{\sesaa}{0}
\newcommand{\sesab}{S_1}
\newcommand{\sesac}{S_1(X)}
\newcommand{\sesad}{S_1(X)|_X}
\newcommand{\sesae}{0}
\newcommand{\sesba}{0}
\newcommand{\sesbb}{Z_1}
\newcommand{\sesbc}{\oplus\mc{O}(-D_i)(X)\otimes\sym[n-1]W}
\newcommand{\sesbd}{\oplus\mc{O}_X(-D_i)(X)\otimes\sym[n-1]W}
\newcommand{\sesbe}{0}
\newcommand{\sesca}{0}
\newcommand{\sescb}{S_0}
\newcommand{\sescc}{\mc{O}(X)\otimes\sym[n]W}
\newcommand{\sescd}{S_0(X)|_X}
\newcommand{\sesce}{0}
\newcommand{\sesyb}{0}
\newcommand{\sesyc}{0}
\newcommand{\sesyd}{0}
%
%
%
%
\begin{equation}\label{diag:s1s0}
\tiny\xymatrix{
& \seszb \ar[d] & \seszc \ar[d] & \seszd \ar[d] &  \\
\sesaa \ar[r] & \sesab \ar[r]\ar[d] & \sesac \ar[r]\ar[d] & \sesad \ar[r]\ar[d] & \sesae \\
\sesba \ar[r] & \sesbb \ar[r]\ar[d] & \sesbc \ar[r]\ar[d]^{E} & \sesbd \ar[r]\ar[d] & \sesbe \\
\sesca \ar[r] & \sescb \ar[r]\ar[d] & \sescc \ar[r]\ar[d] & \sescd \ar[r]\ar[d] & \sesce \\
& \sesyb & \sesyc  & \sesyd  & 
}
\end{equation}
Note that $H^i(Z_1) = H^i(\oplus\mc{O}(-D_\rho))\otimes \sym[n-1]W=0$ from direct toric computation \cite{donagi2014mathematical}. So the map $$\sesbc \to \sesbd$$ in the middle row induces an isomorphism on cohomologies. Hence a standard diagram chasing yields a map $\alpha: H^0(S_1(X)|_X) \to H^0(S_0)$  (in the directions down, left, down, left). The maps are naturally induced, so the diagram commutes.
}
\end{proof}

Now combining the diagrams \eqref{diag:hypersurface-cd-1}\eqref{diag:hypersurface-cd-2}\eqref{diag:hypersurface-cd-3}, we get a map $\tilde{\alpha}$ that makes the following diagram commutes:
{
\newcommand{\A}{\sym[n-1]W}
\newcommand{\B}{H^{n-1}(\wedge^{n-1}\mc{E}_X^*)}
\newcommand{\nA}{H^0(S_1(X)|_X)}
\newcommand{\nnA}{\sym[n]W}
\newcommand{\nnB}{H^{n}(\wedge^{n}\mc{E}_V^*)}
%
%
\begin{equation}
\xymatrix{
~\A	\ar[r]^{\tilde{\alpha}} \ar[d]	&	~\nnA		\ar[d]
\\
\B	\ar[r]^{\delta}	& \nnB
.}
\end{equation}
}
Moreover, tracking along the diagrams \eqref{diag:s1s0}\eqref{diag:exterior-induced}, one finds that the map $\tilde{\alpha}$ fits in
$$
\resizebox{350pt}{!}{\xymatrix{
~H^0(\mc{O}_X)\otimes\sym[n-1]W\ar[r]^{J\ \ \ \ \ \ \ }\ar[rrd]^{\tilde{\alpha}} & H^0(\oplus\mc{O}(-D_i)(X))\otimes\sym[n-1]W\ar[r]^{\ \ \ \ \ \ \ E} & H^0(\mc{O}(X))\otimes\sym[n]W\\
& & H^0(\mc{O})\otimes \sym[n]W\ar[u]^{\cdot f}
}}.
$$
with the fact that $E\circ J = \gamma \cdot f$, we conclude that $\tilde{\alpha}$ is conducted by multiplying $\gamma$. And this finishes the proof of Theorem \ref{thm:hypersuface}.

\section{SCORE: the complete intersection case}\label{sec:CI}

We now consider the case when $X$ is a complete intersection in the toric variety $V$. In particular, we set up $X$ as successive intersections of hypersurfaces in $V$, so that the SCORE formula can be derived inductively.

Let $Y_k, k=1,...,m$ be smooth hypersurfaces in $V$ defined by $f_k\in H^0(V,\mc{O}(H_k))$, such that $X_k:=Y_1\cap ...\cap Y_k$ is smooth. When $m=1$, $X_1=X$ is a hypersurface in $V$, define the bundle $\mc{E}^*_{X_1}$ and $\mc{F}_1$ as $\mc{E}^*_X$ and $\mc{F}$ in Section \ref{sec:hypersurface-setting}. In short, they fit in the following diagram with exact rows and columns:
{
\newcommand{\seszb}{0}
\newcommand{\seszc}{0}
\newcommand{\seszd}{0}
\newcommand{\sesaa}{0}
\newcommand{\sesab}{\mc{O}_{X_1}(-H_1)}
\newcommand{\sesac}{\mc{O}_{X_1}(-H_1)}
\newcommand{\sesba}{0}
\newcommand{\sesbb}{\mc{E}_V^*|_{X_1}}
\newcommand{\sesbc}{\oplus\mc{O}_{X_1}(-D_i)}
\newcommand{\sesbd}{\mc{O}_{X_1}\otimes W}
\newcommand{\sesbe}{0}
\newcommand{\sesca}{0}
\newcommand{\sescb}{\mc{E}_{X_1}^*}
\newcommand{\sescc}{\mc{F}_1}
\newcommand{\sescd}{\mc{O}_{X_1}\otimes W}
\newcommand{\sesce}{0}
\newcommand{\sesyb}{0}
\newcommand{\sesyc}{0}
\newcommand{\sesyd}{0}
%
%
%
%
\begin{equation}\label{diag:F_1}
\resizebox{300pt}{!}{\xymatrix{
& \seszb \ar[d] & \seszc \ar[d]^{} &  &  \\
& \sesab \ar@{=}[r]\ar[d] & \sesac \ar[d]^{J_1} & &  \\
\sesba \ar[r] & \sesbb \ar[r]\ar[d] & \sesbc \ar[r]^{E}\ar[d] & \sesbd \ar[r]\ar@{=}[d] & \sesbe \\
\sesca \ar[r] & \sescb \ar[r]\ar[d] & \sescc \ar[r]^{E}\ar[d] & \sescd \ar[r] & \sesce \\
& \sesyb & \sesyc  &  & 
}}.
\end{equation}
}
For $m\geq 2$, similar to \eqref{coh-def:E}, define $\mc{E}^*_{X_m}$ to be a vector bundle that is the middle cohomology of 
\begin{equation}\label{coh-def:E-CI}
\mc{O}_{X_m}(-H_m) \xrightarrow{J_m} \mc{F}_{m-1}|_{X_m}\xrightarrow{E} \mc{O}_{X_m}\otimes W,
\end{equation}
where $J_m$ is, with an abuse of notation, a descend of some $J_m\in {\rm Hom}(\mc{O}(-H_m),\oplus \mc{O}(-D_i))$. This also implies that there is a $\gamma_m\in W$ such that
\begin{equation}\label{cond:well-def-complex}
E\circ J_m = \gamma_m\cdot f_m\in {\rm Hom}(\mc{O}(-H_m),\mc{O}_V\otimes W).
\end{equation}
Then the bundle $\mc{F}_m$ is defined to fit in the following diagram of exact rows and columns:
{
\newcommand{\seszb}{0}
\newcommand{\seszc}{0}
\newcommand{\seszd}{0}
\newcommand{\sesaa}{0}
\newcommand{\sesab}{\mc{O}_{X_{m}}(-H_m)}
\newcommand{\sesac}{\mc{O}_{X_{m}}(-H_m)}
\newcommand{\sesba}{0}
\newcommand{\sesbb}{\mc{E}_{X_{m-1}}^*|_{X_{m}}}
\newcommand{\sesbc}{\mc{F}_{m-1}|_{X_{m}}}
\newcommand{\sesbd}{\mc{O}_{X_{m}}\otimes W}
\newcommand{\sesbe}{0}
\newcommand{\sesca}{0}
\newcommand{\sescb}{\mc{E}_{X_{m}}^*}
\newcommand{\sescc}{\mc{F}_m}
\newcommand{\sescd}{\mc{O}_{X_{m}}\otimes W}
\newcommand{\sesce}{0}
\newcommand{\sesyb}{0}
\newcommand{\sesyc}{0}
\newcommand{\sesyd}{0}
%
%
%
%
\begin{equation}
\resizebox{300pt}{!}{\xymatrix{
& \seszb \ar[d] & \seszc \ar[d] &  &  \\
& \sesab \ar@{=}[r]\ar[d] & \sesac \ar[d]^{J_m} & &  \\
\sesba \ar[r] & \sesbb \ar[r]\ar[d] & \sesbc \ar[r]\ar[d] & \sesbd \ar[r]\ar@{=}[d] & \sesbe \\
\sesca \ar[r] & \sescb \ar[r]\ar[d] & \sescc \ar[r]\ar[d] & \sescd \ar[r] & \sesce \\
& \sesyb & \sesyc  &  & 
}}.
\end{equation}
}

With this setup, we can show that the argument for the hypersurface can be carried over, resulting a SCORE formula for the toric complete intersection case. 
\begin{thm}[SCORE formula for toric complete intersections]\label{thm:CI}
Let $V$ be a smooth projective toric variety of dimension $n$. Let $Y_k, k=1,...,m (m\leq n-3)$ be smooth hypersurfaces in $V$ defined by hyperplane setions $f_k\in H^0(V,\mc{O}(H_k))$, such that every $X_k:=Y_1\cap ...\cap Y_k$ is smooth. Let the vector bundles $\mc{E}^*_{X_k}$ be small deformations of the cotangent bundle $\Omega_{X_k}$ defined by the middle cohomology of \eqref{coh-def:E-CI} such that $H^1(\mc{E}^*_{X_k})\cong W$. Then for $\sigma_i\in H^1(\mc{E}^*_{X_m}), i=1,...,n-m$, their cohomology product can be computed on the ambient space $V$ via the following formula, with $\gamma_k\in W$ determined by the presentation of $\mc{E}^*_{X_k}$ as in \eqref{cond:well-def-complex}.
\begin{equation}
\langle \sigma_1,\sigma_2,...,\sigma_{n-m}\rangle_X = 
\langle \sigma_1,\sigma_2,...,\sigma_{n-m}, \gamma_1,...,\gamma_{m}\rangle_V.
\end{equation}
\end{thm}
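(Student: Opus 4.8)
The plan is to argue by induction on $m$, viewing $X_m$ as a smooth hypersurface inside the smooth $(n-m+1)$-dimensional variety $X_{m-1}$ and reducing the inductive step to a verbatim repetition of the argument of Section \ref{sec:hypersurface-proof}. The base case $m=1$ is precisely Theorem \ref{thm:hypersuface}. For the inductive step I would first fix the dictionary that promotes $X_{m-1}$ to the role of ``ambient space'': the toric variety $V$ is replaced by $X_{m-1}$, the split bundle $\oplus\mc{O}(-D_i)$ is replaced by $\mc{F}_{m-1}$, the bundle $\mc{E}^*_V$ is replaced by $\mc{E}^*_{X_{m-1}}$, and $\mc{E}^*_{X_m}$, $\mc{F}_m$ take over the roles of $\mc{E}^*_X$, $\mc{F}$. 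The bottom row $0\to\mc{E}^*_{X_{m-1}}\to\mc{F}_{m-1}\to\mc{O}_{X_{m-1}}\otimes W\to 0$ of the diagram defining $\mc{F}_{m-1}$ then substitutes for the deformed Euler sequence \eqref{ses:toric-euler-deformed}, and its Koszul complex, with terms $\wedge^i\mc{F}_{m-1}\otimes\sym[n-m+1-i]W$, plays the role of $Z_\bullet$.

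With this dictionary in hand, I would reconstruct over $X_{m-1}$ the two double complexes of Section \ref{sec:hypersurface-proof}: the analog of the Poincar\'e-residue complex \eqref{diag:PR} (built from the Koszul resolution of $\wedge^{n-m+1}\mc{E}^*_{X_{m-1}}$ together with the twisting sequences obtained from $0\to Z_i\to Z_i(H_m)\to Z_i(H_m)|_{X_m}\to 0$), and the analog of the exterior-power complex \eqref{diag:exterior-induced} (built from the Koszul resolution of $\wedge^{n-m}\mc{E}^*_{X_m}$ via the bottom row $0\to\mc{O}_{X_m}(-H_m)\to\mc{F}_{m-1}|_{X_m}\to\mc{F}_m\to 0$). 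The identical chain of diagram chases produces the three commuting squares and the comparison map $\tilde\alpha$, and since \eqref{coh-def:E-CI} satisfies $E\circ J_m=\gamma_m\cdot f_m$ by \eqref{cond:well-def-complex}, the same computation as at the end of Section \ref{sec:hypersurface-proof} identifies $\tilde\alpha$ with multiplication by $\gamma_m$. This yields the one-step SCORE formula
\[
\langle\sigma_1,\dots,\sigma_{n-m}\rangle_{X_m}=\langle\sigma_1,\dots,\sigma_{n-m},\gamma_m\rangle_{X_{m-1}},
\]
where the right-hand side is the $(n-m+1)$-fold product on $X_{m-1}$. Here the uniform identifications $H^1(\mc{E}^*_{X_k})\cong W$ are what let me regard every $\sigma_i$ and every $\gamma_k$ as a single element of $W$ and the restriction maps $H^1(\mc{E}^*_{X_{m-1}})\to H^1(\mc{E}^*_{X_m})$ as the identity (by functoriality of the connecting maps defining these isomorphisms), so that the insertion of $\gamma_m$ is unambiguous.

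To conclude, I would apply the inductive hypothesis to the $(n-m+1)$ classes $\sigma_1,\dots,\sigma_{n-m},\gamma_m$ on $X_{m-1}$, which descends the product to $V$ with the extra insertions $\gamma_1,\dots,\gamma_{m-1}$, giving $\langle\sigma_1,\dots,\sigma_{n-m},\gamma_m,\gamma_1,\dots,\gamma_{m-1}\rangle_V$; since the $V$-product is symmetric in its $W$-arguments, reordering yields the claimed $\langle\sigma_1,\dots,\sigma_{n-m},\gamma_1,\dots,\gamma_m\rangle_V$, completing the induction.

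The one place where the inductive step is not a literal copy of the hypersurface case, and the main obstacle, is the vanishing input. In Section \ref{sec:hypersurface-proof} the chases rely on the purely toric vanishing $H^i(\oplus\mc{O}(-D_\rho))\otimes\sym[n-1]W=0$, whereas over $X_{m-1}$ the corresponding terms are $\wedge^i\mc{F}_{m-1}$ together with their $H_m$-twists and restrictions to $X_m$, which are no longer direct sums of line bundles on a toric variety. These vanishings are exactly the content of Proposition \ref{prop-CI}, and I would prove them by peeling off the tower $V\supset X_1\supset\cdots\supset X_{m-1}$: repeatedly applying the structure sequences $0\to\mc{O}_{X_k}(-H_k)\to\mc{F}_{k-1}|_{X_k}\to\mc{F}_k\to 0$ and the restriction sequences to each $X_k$ reduces every relevant cohomology group to cohomology of the line bundles $\mc{O}(-D_i)$ and $\mc{O}(-H_k)$ on the various $X_k$, which vanish by toric (Bott-type) vanishing on $V$ together with Kodaira-type vanishing on the smooth complete intersections; the hypothesis $m\le n-3$ is precisely what guarantees enough room for these vanishings (and for the identifications $H^1(\mc{E}^*_{X_k})\cong W$) to persist at every stage. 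Finally, exactly as in the Proposition following \eqref{ses:wedge-n-e-star}, I would record that the relevant connecting map $\delta$ is an isomorphism, which holds for the undeformed cotangent bundles by Kodaira vanishing and hence for the small deformations considered here.
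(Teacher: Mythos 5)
Your overall architecture is exactly the paper's: induction on $m$, promoting $X_{m-1}$ to the role of ambient space with $\mc{F}_{m-1}$ replacing $\oplus\mc{O}(-D_i)$ and the bottom row $0\to\mc{E}^*_{X_{m-1}}\to\mc{F}_{m-1}\to\mc{O}_{X_{m-1}}\otimes W\to 0$ replacing the deformed Euler sequence, rerunning the two double complexes of Section \ref{sec:hypersurface-proof} to obtain the one-step formula $\langle\sigma_1,\dots,\sigma_{n-m}\rangle_{X_m}=\langle\sigma_1,\dots,\sigma_{n-m},\gamma_m\rangle_{X_{m-1}}$, identifying the comparison map with multiplication by $\gamma_m$ via \eqref{cond:well-def-complex}, and then applying the inductive hypothesis. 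That part is fine.

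The divergence, and the gap, is in how you justify the vanishing that makes the key diagram chase (Proposition \ref{prop-CI}, the analogue of Proposition \ref{prop-hypersurface-alpha-map}) work. What is actually needed is only $H^0(\mc{F}_{m-1})=H^1(\mc{F}_{m-1})=0$, so that $\mc{F}_{m-1}(H_m)\otimes\sym[n-m]W\to\mc{F}_{m-1}(H_m)|_{X_m}\otimes\sym[n-m]W$ is an isomorphism on $H^0$. The paper obtains this in two lines from the long exact sequence \eqref{les:E-F} attached to $0\to\mc{E}^*_{X_{m-1}}\to\mc{F}_{m-1}\to\mc{O}_{X_{m-1}}\otimes W\to 0$: the hypothesis $H^1(\mc{E}^*_{X_{m-1}})\cong W$ says precisely that the connecting map $W\to H^1(\mc{E}^*_{X_{m-1}})$ is an isomorphism, which forces $H^1(\mc{F}_{m-1})=0$, while $H^0(\mc{E}^*_{X_{m-1}})=0$ (semicontinuity from $H^0(\Omega_{X_{m-1}})=0$) forces $H^0(\mc{F}_{m-1})=0$. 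You instead propose to peel the tower $V\supset X_1\supset\cdots\supset X_{m-1}$ down to line bundles and invoke ``Bott-type'' and ``Kodaira-type'' vanishing. This is where your argument does not go through as written: the reduction produces groups of the form $H^j\bigl(X_k,\mc{O}(-D_i-\textstyle\sum_l H_l)\bigr)$ with $j\geq 1$, and since the $D_i$ are arbitrary toric invariant divisors (not assumed ample or nef), no vanishing theorem you cite covers these twists; the toric computation $H^\bullet(\oplus\mc{O}(-D_\rho))=0$ does not survive twisting by $-\sum H_l$ or restriction in general. Moreover, your route never uses the hypothesis $H^1(\mc{E}^*_{X_k})\cong W$, which is placed in the theorem statement exactly to supply this vanishing. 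Replacing the peeling argument by the long exact sequence \eqref{les:E-F} closes the gap and recovers the paper's proof.
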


\begin{proof} We prove the theorem by induction on $m$. When $m=1$, this is the hypersurface case Theorem \ref{thm:hypersuface}. Assume the theorem holds for $m-1$, we prove the case for $m$ by showing that $\langle \sigma_1,\sigma_2,...,\sigma_{n-m}\rangle_{X_{m}} = 
\langle \sigma_1,\sigma_2,...,\sigma_{n-m}, \gamma_m\rangle_{X_{m-1}}.$ The proof is completely analogous to the one given in Section \ref{sec:hypersurface-proof}, with the following proposition in place of Proposition \ref{prop-hypersurface-alpha-map}.
\end{proof}
\begin{prop}\label{prop-CI}
Let $\mc{K}$ be the kernel of the map $$ {\mc{F}_{m-1}(H_m)|_{X_{m}}\otimes \sym[n-m]W} \to {\mc{O}_{X_{m}}(H_m)\otimes \sym[n-m+1]W}.$$Then there exists a map $\alpha: H^0(\mc{K}) \to \sym[n-m+1]W$ makes the following diagram commute:
{
\newcommand{\A}{\sym[n-1]W}
\newcommand{\B}{H^{n-m}(\wedge^{n-m}\mc{E}_{X_{n-m}}^*)}
\newcommand{\nA}{H^0(S_1(H_m)|_{X_m})}
\newcommand{\nnA}{\sym[n-m+1]W}
\newcommand{\nnB}{H^{n-m+1}(\wedge^{n-m+1}\mc{E}_{X_{n-m+1}}^*)}
%
%
\begin{equation}\label{diag:CD-CI-2}
\xymatrix{
%
H^0(\mc{K})	\ar@{-->}[r]^{\alpha} \ar[d]&	~\nnA		\ar[d]
\\
\B	\ar[r]^{\delta\ \ \ } &  \nnB
.}
\end{equation}
}
\end{prop}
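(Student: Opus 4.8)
The plan is to transcribe the proof of Proposition~\ref{prop-hypersurface-alpha-map} under the dictionary $V\rightsquigarrow X_{m-1}$, $X\rightsquigarrow X_m$, $n\rightsquigarrow N:=\dim X_{m-1}=n-m+1$, with the splitting bundle $\oplus\mc{O}(-D_i)$ replaced by $\mc{F}_{m-1}$ and the twist $\mc{O}(X)$ by $\mc{O}_{X_{m-1}}(H_m)$. Since $\mc{E}^*_{X_{m-1}}={\rm Ker}(E\colon\mc{F}_{m-1}\to\mc{O}_{X_{m-1}}\otimes W)$ is presented as the kernel of a surjection out of $\mc{F}_{m-1}$, the bundle $\wedge^N\mc{E}^*_{X_{m-1}}$ has a Koszul resolution with terms $Z_i:=\wedge^i\mc{F}_{m-1}\otimes\sym[n-m+1-i]W$; writing $S_1:={\rm Ker}(Z_1\to Z_0)$ as in \eqref{ses:szs}, one checks that $\mc{K}=S_1(H_m)|_{X_m}$, where $Z_1=\mc{F}_{m-1}\otimes\sym[n-m]W$ and $Z_0=S_0=\mc{O}_{X_{m-1}}\otimes\sym[n-m+1]W$. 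First I would reproduce the $3\times 3$ diagram \eqref{diag:s1s0} in this setting, assembled from the three structure sequences $0\to Z_i\to Z_i(H_m)\to Z_i(H_m)|_{X_m}\to 0$ and the Koszul column $0\to S_1\to Z_1\to S_0\to 0$; its two relevant corners are $H^0(\mc{K})$ and $H^0(S_0)=\sym[n-m+1]W$.

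Granting the vanishing discussed below, the construction of $\alpha$ is then purely formal. From the structure sequence $0\to Z_1\to Z_1(H_m)\to Z_1(H_m)|_{X_m}\to 0$ and the vanishing $H^0(Z_1)=H^1(Z_1)=0$ (equivalently $H^i(\mc{F}_{m-1})=0$ for $i=0,1$), the restriction $H^0(Z_1(H_m))\to H^0(Z_1(H_m)|_{X_m})$ is an isomorphism; this is the single place where the vanishing is used, and it is the exact analog of ``$H^i(Z_1)=0$'' in Proposition~\ref{prop-hypersurface-alpha-map}. The same ``down, left, down, left'' chase then yields $\alpha\colon H^0(\mc{K})\to H^0(S_0)=\sym[n-m+1]W$, and the commutativity of \eqref{diag:CD-CI-2} follows from the naturality of all connecting maps exactly as in the hypersurface case, with $\delta$ now the map induced by \eqref{ses:wedge-n-e-star} for the pair $X_m\subset X_{m-1}$.

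The main obstacle, and the only genuinely new input, is the vanishing
\[
H^0(X_{m-1},\mc{F}_{m-1})=H^1(X_{m-1},\mc{F}_{m-1})=0
\]
(the $\sym[n-m]W$ factor in $Z_1$ is a direct sum and may be ignored). I would deduce it from the bottom row $0\to\mc{E}^*_{X_{m-1}}\to\mc{F}_{m-1}\to\mc{O}_{X_{m-1}}\otimes W\to 0$ of the defining diagram, whose long exact sequence reads
\[
0\to H^0(\mc{F}_{m-1})\to W\xrightarrow{\ \partial\ }H^1(\mc{E}^*_{X_{m-1}})\to H^1(\mc{F}_{m-1})\to H^1(\mc{O}_{X_{m-1}})\otimes W .
\]
Thus the claim is equivalent to $H^0(\mc{E}^*_{X_{m-1}})=0$, $H^1(\mc{O}_{X_{m-1}})=0$, and the connecting map $\partial\colon W\to H^1(\mc{E}^*_{X_{m-1}})$ being an isomorphism --- which is exactly the avatar of the hypothesis $H^1(\mc{E}^*_{X_{m-1}})\cong W$ realized through $\partial$.

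To verify these I would reduce to the undeformed bundle by upper semicontinuity, precisely as in the Proposition following \eqref{ses:poincare}: each $\mc{F}_{m-1}$ is a small deformation of the undeformed $\mc{F}_{m-1}^0$ (for which $\mc{E}^*_{X_{m-1}}=\Omega_{X_{m-1}}$), and $h^i$ can only drop on nearby fibres, so it suffices to treat the central fibre. There the sequence becomes $0\to\Omega_{X_{m-1}}\to\mc{F}_{m-1}^0\to\mc{O}_{X_{m-1}}\otimes W\to 0$, and by naturality of the toric Euler construction $\partial$ is identified with the restriction of first Chern classes $W\cong H^{1,1}(V)\to H^{1,1}(X_{m-1})=H^1(\Omega_{X_{m-1}})$. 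The auxiliary vanishings $H^0(\Omega_{X_{m-1}})=H^{1,0}(X_{m-1})=0$ and $H^1(\mc{O}_{X_{m-1}})=H^{0,1}(X_{m-1})=0$, together with the restriction isomorphism $H^{1,1}(V)\xrightarrow{\sim}H^{1,1}(X_{m-1})$, all follow from the Lefschetz hyperplane theorem applied along the chain $V\supset X_1\supset\dots\supset X_{m-1}$ (using $H^1(V)=0$ and $H^{1,1}(V)\cong W$ for the toric base). The hypothesis $m\le n-3$ keeps every $\dim X_k=n-k$ (for $k\le m-1$) comfortably inside the Lefschetz range, so the $H^2$-isomorphism survives every restriction; this is the point where that numerical hypothesis enters, and the bookkeeping of the Lefschetz range across the whole chain is the only delicate part of the argument.
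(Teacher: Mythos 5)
Your proposal is correct and follows essentially the same route as the paper: identify $\mc{K}$ with $\mathbb{S}_1(H_m)|_{X_m}$ in the Koszul/restriction double complex built from the presentation $0\to\mc{E}^*_{X_{m-1}}\to\mc{F}_{m-1}\to\mc{O}_{X_{m-1}}\otimes W\to 0$, run the same ``down, left, down, left'' chase, and reduce everything to $H^0(\mc{F}_{m-1})=H^1(\mc{F}_{m-1})=0$, which both you and the paper extract from the long exact sequence of that presentation together with the hypothesis $H^1(\mc{E}^*_{X_{m-1}})\cong W$ and semicontinuity from the undeformed case. The only difference is that you spell out two points the paper leaves implicit --- that $H^1(\mc{O}_{X_{m-1}})=0$ is needed to truncate the long exact sequence at $H^1(\mc{F}_{m-1})$, and the Lefschetz verification that the connecting map is an isomorphism in the undeformed case (which the paper defers to its closing remark) --- so no substantive comparison is needed.
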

\begin{proof}
We first introduce notations similar to the hypersurface case so that the analogy between this proposition and Propsition \ref{prop-hypersurface-alpha-map} is apparent. 
The map defining $\mc{K}$ appears in the bottom right of the following diagram analogous to \eqref{diag:PR} in Section \ref{sec:hypersurface-proof}:
{
\newcommand{\seszb}{0}
\newcommand{\seszc}{0}
\newcommand{\seszd}{0}

\newcommand{\sesaa}{0}
\newcommand{\sesab}{\wedge^{n-m+1}\mc{E}_{X_{m-1}}^*}
\newcommand{\sesac}{\wedge^{n-m+1}\mc{E}_{X_{m-1}}^*(H_m)}
\newcommand{\sesad}{\wedge^{n-m}\mc{E}_{X_{m}}^*}
\newcommand{\sesae}{0}
\newcommand{\sesba}{0}
\newcommand{\sesbb}{\wedge^{n-m+1} \mc{F}_{m-1}}
\newcommand{\sesbc}{\wedge^{n-m+1} \mc{F}_{m-1}(H_m)}
\newcommand{\sesbd}{\wedge^{n-m+1} \mc{F}_{m-1}(H_m)|_{X_{m}}}
\newcommand{\sesbe}{0}
\newcommand{\sesca}{0}
\newcommand{\sescb}{\mc{F}_{m-1}\otimes \sym[n-m]W}
\newcommand{\sescc}{\mc{F}_{m-1}(H_m)\otimes \sym[n-m]W}
\newcommand{\sescd}{\mc{F}_{m-1}(H_m)|_{X_{m}}\otimes \sym[n-m]W}
\newcommand{\sesce}{0}
\newcommand{\sesda}{0}
\newcommand{\sesdb}{\mc{O}_{X_{m-1}}\otimes \sym[n-m+1]W}
\newcommand{\sesdc}{\mc{O}_{X_{m-1}}(H_m)\otimes \sym[n-m+1]W}
\newcommand{\sesdd}{\mc{O}_{X_{m}}(H_m)\otimes \sym[n-m+1]W}
\newcommand{\sesde}{0}
\newcommand{\sesyb}{0}
\newcommand{\sesyc}{0}
\newcommand{\sesyd}{0}
\newcommand{\seskb}{\mathbb{S}_1}
\newcommand{\seskc}{\mathbb{S}_1(H_m)}
\newcommand{\seskd}{\mathbb{S}_1(H_m)|_{X_m}}
\begin{equation}\label{diag:PR-CI}
\resizebox{350pt}{!}{\xymatrix{
& \seszb \ar[d] & \seszc \ar[d]_{} & \seszd \ar[d]^{} &  \\
\sesaa \ar[r] & \sesab \ar[r]\ar[d] & \sesac \ar[r]\ar[d] & \sesad \ar[r]\ar[d] & \sesae \\
\sesba \ar[r] & \sesbb \ar[r]\ar@{-->}[d] & \sesbc \ar[r]\ar@{-->}[d] & \sesbd \ar[r]\ar@{-->}[d] & \sesbe \\
\sesca \ar[r] & \sescb \ar[r]\ar[d] & \sescc \ar[r]\ar[d] & \sescd \ar[r]\ar[d] & \sesce \\
\sesda \ar[r] & \sesdb \ar[r]\ar[d] & \sesdc \ar[r]\ar[d] & \sesdd \ar[r]\ar[d] & \sesde \\
& \sesyb & \sesyc& \sesyd  & 
}}
\end{equation}
From it we get 
\begin{equation}
\resizebox{350pt}{!}{\xymatrix{
& \seszb \ar[d] & \seszc \ar[d]_{} & \seszd \ar[d] &  \\
\sesaa \ar[r] & \seskb \ar[r]\ar[d] & \seskc \ar[r]\ar[d] & \seskd \ar[r]\ar[d] & \sesae \\
\sesca \ar[r] & \sescb \ar[r]\ar[d] & \sescc \ar[r]\ar[d] & \sescd \ar[r]\ar[d] & \sesce \\
\sesda \ar[r] & \sesdb \ar[r]\ar[d] & \sesdc \ar[r]\ar[d] & \sesdd \ar[r]\ar[d] & \sesde \\
& \sesyb & \sesyc& \sesyd  & 
}}.
\end{equation}
So $\mc{K} = \seskd $ and it suffices to show that $\alpha$ makes the diagram 
\begin{equation}
\xymatrix{
~ H^0(\mathbb{S}_1(H_m)|_{X_m})\ar@{-->}[r]^{\alpha}\ar[dr] & ~\sym[n-m+1]W\ar[d]\\
&H^1(\mathbb{S}_1) 
}
\end{equation}
commute.
Analogous to Proposition \ref{prop-hypersurface-alpha-map}, it suffices to show that 
$$\sescc \to \sescd $$ induces an isomorphism on $H^0$. This is true if $H^0(\mc{F}_{m-1}) = H^1(\mc{F}_{m-1}) =0$.  Tracking from the last row of \eqref{diag:F_1}, We have
\begin{equation}\label{les:E-F}
0 \to H^0 (\mc{E}^*_{X_{m-1}}) \to H^0(\mc{F}_{m-1}) \to W \xrightarrow{\delta_0} H^1(\mc{E}^*_{X_{m-1}}) \to H^1(\mc{F}_{m-1}) \to 0.
\end{equation}
By our assumption, the connecting morphism $\delta_0$ is an isomorphism. Hence $H^1(\mc{F}_{m-1})=0$. Note that for the undeformed case, $\mc{E}^*_{X_{m-1}}$ is the cotangent bundle, and $H^0(\Omega_{X_{m-1}}) = 0$. By the semi-continuity of cohomology dimensions, $H^0(\mc{E}^*_{X_{m-1}}) = 0$. Hence $H^0(\mc{F}_{m-1})=0$. The rest of the proof is identical to that of Proposition \ref{prop-hypersurface-alpha-map}.
}
\end{proof}

Finally, we remark that $H^1(\mc{E}^*_{X_k} )\cong W$ is not a strong constraint. By the Lefschetz hyperplane theorem, it is always true when $\mc{E}^*_{X_k}$ is the cotangent bundle and $\dim X_k \geq 3$. Then by \eqref{les:E-F} and a semi-continuity argument one can conclude that for bundles $\mc{F}_{k}$ in a flat family $H^1(\mc{E}^*_{X_k} )\cong W$ always holds.
 When $\dim X_m =2$, $H^1(\Omega_{X_m})$ might be bigger. But examining the proof of Proposition \ref{prop-CI} one finds that the SCORE formula still holds for the ``toric part" of $H^1(\mc{E}^*_{X_m})$, namely the image of $W$ in $H^1(\mc{E}^*_{X_m})$.

\bibliography{CI.bbl}

\begin{thebibliography}{10}

\bibitem{cox2011toric}
David~A. Cox, John~B. Little, and Henry~K. Schenck.
\newblock {\em Toric varieties}.
\newblock American Mathematical Soc., 2011.

\bibitem{donagi2013physical}
Ron Donagi, Josh Guffin, Sheldon Katz, and Eric Sharpe.
\newblock Physical aspects of quantum sheaf cohomology for deformations of
  tangent bundles of toric varieties.
\newblock {\em Advances in Theoretical and Mathematical Physics},
  17(6):1255--1301, 2013.

\bibitem{donagi2014mathematical}
Ron Donagi, Josh Guffin, Sheldon Katz, and Eric Sharpe.
\newblock A mathematical theory of quantum sheaf cohomology.
\newblock {\em Asian Journal of Mathematics}, 18(3):387--418, 2014.

\bibitem{guo2020quantum}
Jirui Guo.
\newblock {Quantum Sheaf Cohomology and Duality of Flag Manifolds}.
\newblock {\em Communications in Mathematical Physics}, 374(2):661--688, 2020.

\bibitem{guo2017classical}
Jirui Guo, Zhentao Lu, and Eric Sharpe.
\newblock {Classical sheaf cohomology rings on Grassmannians}.
\newblock {\em Journal of Algebra}, 486:246--287, 2017.

\bibitem{Guo2017}
Jirui Guo, Zhentao Lu, and Eric Sharpe.
\newblock {Quantum Sheaf Cohomology on Grassmannians}.
\newblock {\em Communications in Mathematical Physics}, 352(1):135--184, 2017.

\bibitem{katz2006notes}
Sheldon Katz and Eric Sharpe.
\newblock Notes on certain (0, 2) correlation functions.
\newblock {\em Communications in mathematical physics}, 262(3):611--644, 2006.

\bibitem{kreuzer20110}
Maximilian Kreuzer, Jock McOrist, Ilarion~V Melnikov, and M~Ronen Plesser.
\newblock (0, 2) deformations of linear sigma models.
\newblock {\em Journal of High Energy Physics}, 2011(7):1--30, 2011.

\bibitem{mcorist2011revival}
Jock McOrist.
\newblock The revival of (0, 2) sigma models.
\newblock {\em International Journal of Modern Physics A}, 26(01):1--41, 2011.

\bibitem{mcorist2008half}
Jock McOrist and Ilarion~V Melnikov.
\newblock {Half-twisted correlators from the Coulomb branch}.
\newblock {\em Journal of High Energy Physics}, 2008(04):071, 2008.

\bibitem{mcorist2009summing}
Jock McOrist and Ilarion~V Melnikov.
\newblock Summing the instantons in half-twisted linear sigma models.
\newblock {\em Journal of High Energy Physics}, 2009(02):026, 2009.

\bibitem{melnikov2019introduction}
Ilarion~V Melnikov.
\newblock {\em {An Introduction to Two-Dimensional Quantum Field Theory with
  (0, 2) Supersymmetry}}.
\newblock Springer, 2019.

\bibitem{witten1993phases}
Edward Witten.
\newblock {Phases of N = 2 theories in two dimensions}.
\newblock {\em Nuclear Physics B}, 403(1):159--222, 1993.

\end{thebibliography}
\bibliographystyle{plain}
\end{document}